\documentclass[letterpaper, 10 pt, conference]{ieeeconf}  % Comment this line out if you need a4paper

\IEEEoverridecommandlockouts                              % This command is only needed if 
\usepackage{amsmath}
\usepackage{amsfonts}
\usepackage{amssymb}

\usepackage{amsthm}
\usepackage{graphicx}
\usepackage[colorlinks=true, allcolors=blue]{hyperref}

\usepackage{caption}
\usepackage{subcaption}
\usepackage{dsfont}

\usepackage{placeins}

\usepackage{xurl}

\newcommand{\y}{\mathbf{y}}
\newcommand{\bc}{\mathbf{c}}
\newcommand{\bchat}{\mathbf{\hat{c}}}
\newcommand{\chat}{\hat{c}}

\DeclareMathOperator*{\argmax}{arg\,max}

\newcommand{\subS}{{}_{{\scalebox{0.5}{$\mathrm{S}$}}}}
\newcommand{\subN}{{}_{{\scalebox{0.5}{$\mathrm{N}$}}}}
\newcommand{\subI}{{}_{{\scalebox{0.5}{$\mathrm{I}$}}}}
\newcommand{\subR}{{}_{{\scalebox{0.5}{$\mathrm{R}$}}}}
\newcommand{\subD}{{}_{{\scalebox{0.5}{$\mathrm{D}$}}}}

\newcommand{\cS}{c\subS^{}}
\newcommand{\cSS}{c\subS^{*}}
\newcommand{\cNS}{c\subN^{*}}

\newcommand{\cI}{c\subI^{}}
\newcommand{\cR}{c\subR^{}}
\newcommand{\cN}{c\subN^{}}

\newcommand{\cIbar}{\bar{c}\subI^{}}

\newcommand{\cNbar}{\bar{c}\subN^{}}

\newcommand{\uI}{u\subI^{}}

\newcommand{\uN}{u\subN^{}}

\newcommand{\uIbar}{\bar{u}\subI^{}}

\newcommand{\uNbar}{\bar{u}\subN^{}}

\newcommand{\VS}{V_{{\scalebox{0.5}{$\mathrm{S}$}}}}
\newcommand{\VI}{V_{{\scalebox{0.5}{$\mathrm{I}$}}}}
\newcommand{\VR}{V_{{\scalebox{0.5}{$\mathrm{R}$}}}}
\newcommand{\VN}{V_{{\scalebox{0.5}{$\mathrm{N}$}}}}
\newcommand{\VD}{\Psi_{{\scalebox{0.5}{$\mathrm{D}$}}}}

\newcommand{\newest}{\textcolor{black}}

\usepackage[noadjust]{cite}

\newtheorem*{theorem*}{Theorem}

\newtheorem*{example*}{Example}

\newtheorem*{fact*}{Fact}

\newtheorem*{prop*}{Proposition}

\newtheorem*{rem*}{Remark}

\newtheorem*{assumption*}{Assumption}

\title{\LARGE \bf
Partial health status observability and time horizon uncertainty\\ in mean-field game epidemiological models$^*$
}

\author{Carlos Doebeli$^{1}$ and Alexander Vladimirsky$^{2}$% <-this % stops a space
\thanks{*This work was partially supported by 
the NSF (DMS-2111522),
AFOSR (FA9550-22-1-0528), 
and the Royal Society Wolfson Visiting Fellowship awarded to the second author.}% <-this % stops a space
\thanks{$^{1}$Carlos Doebeli is with the Department of Mathematics, Imperial College London, SW7 2AZ, UK,
        {\tt\small c.doebeli22@imperial.ac.uk}}%
\thanks{$^{2}$Alexander Vladimirsky is with the Department of Mathematics, Cornell University,
        Ithaca, NY 14850, USA
        {\tt\small vladimirsky@cornell.edu}}%
}

\begin{document}

\maketitle
\thispagestyle{empty}
\pagestyle{empty}

%%%%%%%%%%%%%%%%%%%%%%%%%%%%%%%%%%%%%%%%%%%%%%%%%%%%%%%%%%%%%%%%%%%%%%%%%%%%%%%%
\begin{abstract}
We introduce Mean-Field Game (MFG) epidemiological models, in which 
immunity either wanes with time in a fully observable way
or disappears instantaneously with no direct observation
(making a previously recovered individual fully susceptible again 
without realizing it).
Both interpretations create 
computational challenges for rational noninfected individuals 
deciding on their contact rates based on their personal current immunity 
state and the changing epidemiological situation.  
Both require solving a forward-backward MFG system that includes PDEs
(an advection-reaction equation for the immunity-structured population and a Hamilton-Jacobi-Bellman equation for the corresponding value function).
We show how this can be done efficiently by solving a two-point boundary value problem for a system of approximating ODEs.  We also show how the same approach can be extended to handle an initial uncertainty in the planning horizon.
\end{abstract}

%%%%%%%%%%%%%%%%%%%%%%%%%%%%%%%%%%%%%%%%%%%%%%%%%%%%%%%%%%%%%%%%%%%%%%%%%%%%%%%%
\section{INTRODUCTION}

Details of human behavior are an important factor in the spread of infectious diseases.
While in traditional epidemiological models most types of behavior (e.g., individuals' degree of compliance with social distancing recommendations) are pre-programmed, in Mean-Field Game (MFG) models individuals are assumed to make decisions (e.g., on their individual contact rate) rationally 
to maximize their personal ``payoff' 
based on their current health status and the evolving epidemiological situation
\cite{reluga2010game, tembine2020covid, doncel2022mean, aurell2022optimal, olmez2022modeling, aurell2022finite, roy2023recent, bremaud2024mean, bremaud2025mean, buckley2025behavioral, liu2025incorporating}.  
Here, our goal is to show how such models can be extended to handle partial observability of health states and uncertainty in the planning horizon.
Both of these features have obvious practical value since individuals
often do not fully know their immunity status and the duration of the epidemic is \emph{a priori} unknown.

A common assumption in many epidemiological models is that an individual gains immunity after becoming infected and then recovering.  How long that immunity lasts depends on the disease, but in most cases becoming susceptible is viewed as an instantaneous randomly-timed event, and an individual is assumed to be fully aware when this happens.  (As we explain in \S\ref{sec:basic_MFG}, this awareness is important in traditional MFG models, since it can be used to decide on reducing the contact rates to lower the chances of re-infection.)  Here, we consider and compare two more realistic modeling approaches: 
immunity that gradually wanes in a fully observable way (\S\ref{sec:full_obs_wane})
versus full immunity that instantaneously disappears at some random (and unobserved) moment (\S\ref{sec:unobs_disap}).   The latter version can be interpreted in the framework of {\em MFGs over belief space}, which was previously also used (though with many restrictive assumptions) in \cite{olmez2022modeling} to model the behavior of presymptomatic infected individuals.  
Both of our nuanced immunity models lead to MFG systems that include PDEs
(due to an immunity spectrum).  In \S\ref{sec:numerics} we show how these
models can be approximated by a two point boundary value problem for an extended system of ODEs.  Moreover, we show that the version of this problem with an uncertain planning horizon (\S\ref{sec:uncertain}) can be solved in a similar way but with additional jump conditions for each possible value of the time horizon.  Our approach is illustrated by computational experiments in \S\ref{sec:experiments}, and some directions for future work are described in \S\ref{sec:conclusions}.

\subsection*{Basic SIRSD model} \label{sec:SIRSD}

While our approach 
is quite general, 
here we describe it in the context of a specific simple Susceptible-Infected-Susceptible-Recovered-Dead (SIRSD) model,
summarized in ODEs (\ref{eq:SIRSD_ODEs_original_S}-\ref{eq:SIRSD_ODEs_original_D}) below.
For the sake of simplicity, we ignore births and disease-unrelated deaths.  
The population is split into corresponding compartments ($S$, $I$, $R$, and $D$), with the same capital letters also being used to denote their fractions of the initial population, so that $S(t)+I(t)+R(t)+D(t)=1$ at all times.
We assume a {\em density-based force of infection}, which makes the rate of adding newly-infected individuals proportional to $S$ and $I$.  
Specifically, we assume that each susceptible individual has a probability of becoming infected proportional to their contact rate $c\subS$, the contact rate of infected individuals $c\subI$, the current fraction of infected individuals $I(t),$ and a disease-specific parameter $\beta,$ which indicates 
the transmission probability
per $S$-$I$ contact.  (For now, we view all contact rates as known constants, but in the next sections they will be treated as control variables independently tuned by individuals to maximize their personal expected payoffs as the epidemic trajectory changes.)   Here, we further assume that on average an Infected individual recovers in $(1/\mu)$ days, immediately becomes fully immune upon recovery, and later the immunity disappears instantaneously (an $R\rightarrow S$ transition) in, on average, ($1/\gamma$) days after recovery.  
In addition, all infected individuals are also subject to per-capita death rate $\delta.$
\begin{align}
\label{eq:SIRSD_ODEs_original_S}
S' &\; = \; \overbrace{- \beta \cI \cS I S}^{\text{new infections}} \, + \, \overbrace{\gamma R,}^{\text{disappearing immunity}}\\[0.1em]
\label{eq:SIRSD_ODEs_original_I}
I' &\; = \;  \overbrace{\beta \cI \cS I S}^{\text{new infections}} \, - \, \overbrace{\mu I}^{\text{recoveries}} \, - \, \overbrace{\delta I,}^{\text{new deaths}}%\\[0.1em]
\end{align}
\begin{align}
\label{eq:SIRSD_ODEs_original_R}
R' &\; = \;  \overbrace{\mu I}^{\text{recoveries}} \, - \, \overbrace{\gamma R,}^{\text{disappearing immunity}}\\[0.1em]
\label{eq:SIRSD_ODEs_original_D}
D' &\; = \;  \overbrace{\delta I,}^{\text{new deaths}}\\[0.1em]
\nonumber
\text{with }& S(0) = S_0, \; I(0) = I_0, \; R(0) = R_0, \; D(0) = D_0.
\end{align}

\section{MFG-STYLE MODELS}

\subsection{Fully observable absolute immunity model (MFG-SIRSD)}

\label{sec:basic_MFG}
We now introduce a mean-field game model, which arises naturally from the assumption that each agent changes their personal contact rate selfishly and independently based on their knowledge of the current state of the epidemic. 
What determines all individual contact rates is one of the central questions that stimulated the development of behavior-epidemiological models. In a game-theoretic framework, all individuals are assumed to be acting fully rationally and independently, each striving to maximize their personal {\em expected payoff}.
In MFGs, individuals interact with a changing density of other ``players'' and interpret that personal payoff cumulatively \cite{lasry2006jeux_2, huang2006large}.
In our context, this means that the payoff includes
a time integral of some {\em utility function},
which depends on that individual's 
contact rate and changing health status. 
Following \cite{cho2020} and \cite{buckley2025behavioral}, we adopt concave utility functions
\begin{equation}
    \label{eq:utility}
    u_z(c) = \left( b_z c - c^2 \right)^g - a_z,
\end{equation}
in which $c$ is the chosen contact rate, and $z$ indicates the individual's (randomly changing) health status:
$z \in \{N, I\}.$ Here, $N$ stands for 
``Noninfected'',
which includes both Susceptible and Recovered, while $I$ stands for ``Infected''.  
We use parameter values $g = 0.25, b\subN = 10, b\subI = 6, a\subN = 0, a\subI = 4$. 
This ensures that the utility is generally lower for those Infected, and
$\cIbar = \argmax_c \uI(c) = 3$ is lower than $\cNbar = \argmax_c \uN(c) = 5.$
See \cite[Section 2.2]{cho2020} for a detailed modeling justification.
We will also use the notation $\uIbar = \uI(\cIbar)$  and $\uNbar = \uN(\cNbar)$
to denote the maximal instantaneous utility that these individuals can achieve if 
they are not concerned with the consequences of their actions for their future personal payoffs.

If the initial distribution of people among the health states $\y_0 = \left( S_0, I_0, R_0, D_0 \right)$ is known and the entire population follows some fixed contact strategy 
$\bc(t) = \left( c\subS(t), c\subI(t), c\subR(t) \right)$, the ODEs (\ref{eq:SIRSD_ODEs_original_S}-\ref{eq:SIRSD_ODEs_original_D})
yield the resulting epidemic trajectory $\y(t) = \left( S(t), I(t), R(t), D(t) \right)$.
An individual might want to change their personal contact rate strategy to
some $\bchat(t) = \left( \chat\subS(t), \chat\subI(t), \chat\subR(t) \right)$
in order to improve their personal payoff up to some planning horizon $T$.  
Assuming that the rest of the population stays with 
$\bc$, this personal choice will not affect $\y.$
Starting from a state $z(t) = \xi \in \{S, I, R, D\}$ at a time $t < T$, 
an individual's personal payoff can be calculated up to time $T$ as
%\vspace*{-8mm}
\begin{equation}
    \mathcal{J} \left( \xi, t, \bchat(\cdot); \, \y(\cdot), \bc(\cdot) \right) \! = 
     \mathbb{E}
    \left[\int_t^T
    \hspace*{-3mm} 
    u_{z(r)} \left( \chat_{z(r)} (r) \right) dr 
    +
    \Psi_{z(T)} 
    \right]\!,
\end{equation}
where $z(r)\in \{S, I, R, D\}$ is their (randomly changing) health state at a time $r \in [t,T]$
and $\Psi$ is a terminal reward or penalty.
The event of death is handled by imposing a single large penalty (i.e., $u\subD \equiv 0$ 
but $\Psi\subD$ is a large negative number).  Our interpretation of the planning horizon determines 
the other terminal penalties.  Throughout this paper, 
we assume that an effective vaccine is developed and distributed at the
time $T$, so $\Psi\subS = \Psi\subR = 0$.  If all those still infected at $t=T$ are instantaneously cured,
this implies $\Psi\subI = 0$.  (This is the approach taken in most prior MFG models, including \cite{cho2020} and \cite{buckley2025behavioral}.) In contrast, we assume that they still recover with a rate $\mu$ and die with a rate $\delta$. The average number of days that a person stays infected is therefore $1 / (\mu + \delta)$, during which time they would continue receiving a lower utility penalty $(\uIbar - \uNbar)$. 
In addition, there is also a probability $\delta / (\mu + \delta)$ that they will die before recovering, incurring a ``death penalty'' $\Psi\subD$.
The terminal penalty for $z(T) = I$ is thus 
\begin{equation} \label{eq:infected_terminal}
    \Psi\subI = \frac{\uIbar - \uNbar}{\mu + \delta} + \VD \frac{\delta}{\mu + \delta}.
\end{equation}

When an individual chooses $\bchat$ to maximize $\mathcal{J}$, 
it is worth noting that
whenever $z(r) \in \{I, R\},$ the contact rate does not affect the chances of their next health status change. 
In such states, it is thus optimal to maximize the instantaneous utility, taking $\chat\subI = \cIbar$ and $\chat\subR = \cNbar.$  On the other hand, for a susceptible individual, the optimal $\chat\subS$ is usually lower than $\cNbar,$ representing a compromise between the instantaneous utility and their fear of a possible future penalty due to infection or death. 

The {\em value function} encodes an individual's optimal expected payoff remaining up to the time $T$. More precisely, starting from the health status $\xi$ at the time $t,$ 
we define\\
$
    V_{\xi}(t) = \sup_{\bchat(\cdot)} \mathcal{J} \left(\xi, t, \bchat(\cdot); \, \y(\cdot), \bc(\cdot) \right).
$
Based on these definitions, it is obvious that $V\subD(t) = \VD.$  For all other health states, 
we use Bellman's optimality principle to write the value functions at a time $t$ in terms of the value functions at time $t + \tau$:
\begin{equation}
    \begin{aligned}
        \VS(t) =  & \max_{\chat\subS(\cdot)} 
        \Bigg\{ \left( \int_t^{t+\tau} \uN(\chat\subS(r)) dr \right) \\
        & + \left(\beta \cI(t) I(t) \chat\subS(t) \tau \right) \VI(t + \tau) \\ 
        & + \left( 1 - \beta \cI(t) I(t) \chat\subS(t) \tau \right) \VS(t + \tau) \Bigg\} + o(\tau),
    \end{aligned}
\end{equation}
\begin{equation}
    \begin{aligned}
        \VI(t) = 
        &\int_{t}^{t+\tau} \uIbar dr + \mu \tau \VR (t+\tau) + \delta \tau \VD + \\
        & \left(1 - \mu \tau - \delta \tau \right) \VI(t + \tau) + o(\tau),
    \end{aligned}
\end{equation}
\begin{equation}
    \begin{aligned}
        \VR(t) = 
        & \int_t^{t+\tau} \uNbar dr + \gamma \tau \VS(t + \tau) \\
        & + (1 - \gamma \tau) \VR(t + \tau) + o(\tau).
    \end{aligned}
\end{equation}

A standard argument based on Taylor-series expanding the above in $\tau$ yields the system of 
Hamilton-Jacobi ODEs for the value functions:
\begin{align}
\label{eq:MFG_ODEs_original_VS}
\VS' & \; = \; -\uN(\cSS) \, + \, 
\beta \cI \cSS I (\VS-\VI),\\
\label{eq:MFG_ODEs_original_VI}
\VI' & \; = \; -\uIbar \, + \, \mu\big(\VI - \VR \big) 
\, + \, \delta \big(\VI - \VD\big),\\
\label{eq:MFG_ODEs_original_VR}
\VR' & \; = \; -\uNbar \, + \, \gamma\big(\VR - \VS\big),
\end{align}
with the Nash contact rate of Susceptibles specified by
\begin{equation}
\label{eq:cSS}
\cSS \; = \; \argmax_{c}\left\{
\uN(c) \, +\,  \beta \cI c I \big( \VI - \VS \big)
\right\},
\end{equation}
and the terminal conditions $$\VS(T) = \Psi\subS, \, \VI(T) = \Psi\subI, \, \VR(T) = \Psi\subR.$$

Recall that until now, the all-but-focal-individual contact rate policy 
$\bc(t) = \left( c\subS(t), c\subI(t), c\subR(t) \right)$ was assumed fixed.
When everyone instead chooses their contact rates simultaneously and independently, their choices affect the epidemic trajectory $\y$ and the concept of optimality is replaced by {\em Nash equilibrium}.  In MFGs, a contact rate policy $\bc^*(\cdot)$
and the trajectory $\y^*(\cdot)$ form a Nash equilibrium pair if\\ (1) $\y^*(\cdot)$ is generated by ODEs
(\ref{eq:SIRSD_ODEs_original_S}-\ref{eq:SIRSD_ODEs_original_D}) using $\bc^*(\cdot)$ and\\ (2) if no individual has any incentive to change their personal contact rate strategy; i.e.,
$$
\mathcal{J} \left( \xi, t, \bc^*(\cdot); \, \y^*(\cdot), \bc^*(\cdot) \right) 
\; \geq \; 
\mathcal{J} \left( \xi, t, \bchat(\cdot); \, \y^*(\cdot), \bc^*(\cdot) \right),
$$
for all $\xi, \, t, $ and $\bchat(\cdot).$
This has the effect of coupling the ODE systems for the epidemic trajectory
(\ref{eq:SIRSD_ODEs_original_S}-\ref{eq:SIRSD_ODEs_original_D})
and the value functions
(\ref{eq:MFG_ODEs_original_VS}-\ref{eq:MFG_ODEs_original_VR}),
with the substitution of $\cI = \cIbar, \, \cS = \cSS$ in all of these and in \eqref{eq:cSS}.
We refer to this forward-backward system of seven ODEs as the {\em MFG-SIRSD model}.

Until now, we have assumed that the full immunity conferred by an $I\rightarrow R$ transition
was later instantaneously lost as a result of a fully observed  $R\rightarrow S$ transition.
This assumption is very unrealistic, but also  
quite common
in epidemiological models.
There are two very different ways of relaxing it, and we describe both of them in the next two sections.  
In both cases, the discrete $S(t)$ and $R(t)$ are replaced by a continuous spectrum of 
noninfected individuals $N(p,t),$ where $p \in[0,1]$ can be interpreted as their level of 
remaining immunity or 
the level of immunity confidence.  
In these models, $\int_0^1 N(p,t) dp \, + \, I(t) \,+\, D(t) = 1$ for all $t \in [0,T]$
and initial conditions $S(0) = S_0, \, R(0) = R_0$ are replaced by an initial distribution $N(p,0).$

\subsection{Fully observable waning immunity model} \label{sec:full_obs_wane} 
Here we suppose that one's immunity does not disappear instantaneously but instead wanes 
with time (with each noninfected individual fully aware of their current immunity level $p$),
and the transmission probability in each encounter with any already infected individual 
is decreased by a factor of $(1-p).$
For simplicity, we assume that the immunity level is absolute ($p=1$) at recovery and decreases exponentially from that point, i.e., with $p' = - \gamma p,$ until a possible re-infection.
Without re-infection, a recovery confers the ``overall immunity'' of $\int_0^{\infty} e^{-\gamma t} dt = 1/\gamma,$
which is consistent with the expected overall immunity in the original model (the full immunity with $p=1$ for the expected time of $1/\gamma$). 
This predictable decrease in immunity yields a convection-type term in a PDE for $N$
with the 
``immunity drift''\footnote{Assuming possible dependence of $f$ on the chosen contact rate $c$ and time $t$ is not needed here but becomes useful in the next subsection.} $f(p,c,t) = - \gamma p.$
Finally, since the noninfected population is $p$-structured, the total number of newly infected per unit time is computed by integrating over all $p$, while the new recoveries yield the boundary condition for $N(p=1,t)$.  
The resulting MFG system is summarized  
in equations 
(\ref{eq:NID_N}-\ref{eq:NID_cSS}).
\begin{align}
\label{eq:NID_N}
\frac{\partial N}{\partial t} &\; = \; 
\overbrace{- \frac{\partial}{\partial p} \left[ f \left(p,\cNS, t \right) N\right]}^{\text{waning immunity}} \, - \,
\overbrace{ \beta (1-p) \cIbar \cNS I N,}^{\text{new infections}}\\[0.3em]
\label{eq:NID_I}
\frac{dI}{dt} &\; = \;  \overbrace{\int_0^1 \beta (1-p) \cIbar \cNS(p,t) I N \, dp}^{\text{new infections}} \, -  
\hspace*{-2mm} \overbrace{\mu I}^{\text{recoveries}} 
\hspace*{-2mm} - 
\overbrace{\delta I,}^{\text{deaths}}\\[0.3em]
\label{eq:NID_D}
\frac{dD}{dt} &\; = \;  \overbrace{\delta I,}^{\text{deaths}}\\[0.3em]
\label{eq:NID_VN}
\nonumber
\frac{\partial \VN}{\partial t} & \; = \; -\uN\left( \cNS(p) \right) 
\, - \, f \left(p,\cNS, t \right) \frac{\partial \VN}{\partial p}\\
& \hspace*{5mm}
\, + \, 
\beta (1-p) \cIbar \cNS I (\VN-\VI),\\
\label{eq:NID_VI}
\frac{d \VI}{dt} & \; = \; -\uIbar \, + \, \mu\big(\VI - \VN(1,t) \big) 
\, + \, \delta \big(\VI - \VD\big),
\end{align}
with the boundary condition due to recoveries
\begin{equation}
\label{eq:NID_boundary}
N(1,t) \; = \; \mu I(t), \quad \text{for } t \in (0, T],
\end{equation}
and the Nash-optimal contact rate of noninfected individuals 
\begin{align}
\nonumber
& \hspace*{-3mm}
\cNS(p,t)  =  \argmax_{c}\bigg\{
\uN(c) \, +  \, f(p,c,t) \frac{\partial \VN}{\partial p}  \, +\\  
\label{eq:NID_cSS}
& \hspace*{37mm}
\beta \cIbar c I (1-p) \big( \VI - \VN \big)
\bigg\}.
\end{align}
The initial health state of the population provides the initial conditions for $N, I,$ and $D$
while the terminal penalties specify the terminal conditions $\VN(p,T) = \Psi\subN = 0$ and $\VI(T) = \Psi\subI,$
as defined in the previous section.
\newest{We note that the value function $\VN$ need not be a smooth function of $p$ and thus has to be interpreted as a {\em viscosity solution} of PDE \eqref{eq:NID_VN}; see \cite{cardaliaguet2010notes}.}

\subsection{Unobserved disappearing immunity model} \label{sec:unobs_disap}

An alternative approach to generalizing the original  
MFG-SIRSD
model 
(\ref{eq:SIRSD_ODEs_original_S}-\ref{eq:SIRSD_ODEs_original_D}, \ref{eq:MFG_ODEs_original_VS}-\ref{eq:MFG_ODEs_original_VR})
is to continue treating the loss of immunity as instantaneous,
while recognizing that such ``$R \rightarrow S$ transitions'' are not directly 
observed. 
(Note that we still view getting infected and recovering as observable transitions;
i.e., unlike in \cite{olmez2022modeling}, we do not consider the possibility of presymptomatic infected individuals.)
As a result, each individual does not know their immunity status and instead must form probabilistic beliefs about their immunity.
In this interpretation, $p \in [0,1]$ becomes the probability of 
still being fully immune (i.e., in $R$), $(1-p)$ is the probability of being fully 
susceptible (i.e., in $S$), and $N(\cdot,t)$ is the probability-of-immunity-structured noninfected population at the time $t$.
Thus, noninfected individuals' decisions on contact rate are based on their current personal ``immunity belief'' $p,$ and the resulting Hamilton-Jacobi PDE can be derived in the spirit of ``belief space dynamic programming'' \cite{astrom1965optimal}.
Given the average time to loss of immunity $(1/\gamma)$, it might seem reasonable at first to assume the belief dynamics of $p'(t) = -\gamma p(t),$ just as in the previous section.
However, noninfected individuals receive additional information, which modifies their belief drift: the fact that they have not become (re)-infected up till now is additional evidence that increases the probability of them still being immune. The value of this information also depends on their choice of contact rate $\cN(p(t),t)$ and the fraction of infected individuals $I(t).$  
\begin{prop*}
    Suppose a noninfected individual has last recovered at the time $t\subR$
    and now follows some chosen continuous contact rate policy $c(t)$ for $t \geq t\subR$ until a possible future re-infection. If they are fully rational and aware of the changing epidemiological situation, their immunity belief starts from $p(t\subR) =1$ and from then on follows the dynamics $p'(t) = f(p(t), c(t), t)$ with
    \begin{equation}
        \label{eq:belief_drift}
        f(p, c, t) \; = \;
        - \gamma p + \beta \cIbar I(t) c p(1-p).
    \end{equation}
\end{prop*}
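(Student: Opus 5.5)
We will identify $p(t)$ with the posterior probability of being in $R$ (still immune), given the information available at time $t$. Since recovery at $t\subR$ is observed, $p(t\subR)=1$. Since infection is also observed, the only further information is the event $A_t$ that no re-infection occurred on $[t\subR,t]$. The plan is to compute $P(\text{immune at } t \mid A_t)$ with Bayes' rule over a short interval $[t,t+\tau]$, and then pass to the limit $\tau\to 0$ to obtain an ODE.

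First we fix the hidden two-state dynamics (as in the MFG-SIRSD model). From $R$ the individual moves to $S$ with rate $\gamma$ and cannot be infected. From $S$, with contact rate $c(t)$, infection occurs with rate $\lambda(t)=\beta\cIbar I(t)c(t)$. Here $\cI=\cIbar$ in equilibrium, and $I(\cdot)$ is the known epidemic trajectory. Over $[t,t+\tau]$, conditioned on $A_t$ and on belief $p(t)=p$, the joint probabilities to first order in $\tau$ are:
\begin{align*}
\Pr(R \text{ at } t+\tau,\ \text{no infection}) &= p(1-\gamma\tau)+o(\tau),\\
\Pr(S \text{ at } t+\tau,\ \text{no infection}) &= (1-p)(1-\lambda\tau)+p\gamma\tau+o(\tau).
\end{align*}
In the second line we will note that a transition $R\to S$ followed by infection within the same interval has probability $O(\tau^2)$, and continuity of $c$ and $I$ makes $\lambda$ essentially constant on the interval.

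Next, by Bayes' rule,
\[
p(t+\tau)=\frac{p(1-\gamma\tau)}{1-(1-p)\lambda\tau}+o(\tau)
= p-\gamma p\tau+\lambda p(1-p)\tau+o(\tau).
\]
Subtracting $p(t)$, dividing by $\tau$ and letting $\tau\to0$ gives $p'=-\gamma p+\beta\cIbar I(t)c(t)p(1-p)$, which is \eqref{eq:belief_drift}.

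As a rigorous cross-check, we can also solve exactly. Let $q_R(t)=e^{-\gamma(t-t\subR)}$ and let $q_S$ be the unnormalized probabilities of being in $R$ or $S$ and uninfected. They satisfy the linear ODEs $q_R'=-\gamma q_R$ and $q_S'=\gamma q_R-\lambda q_S$, with $q_R(t\subR)=1$ and $q_S(t\subR)=0$. Then $p=q_R/(q_R+q_S)$, and the quotient rule yields the same Riccati-type ODE. This version avoids the $o(\tau)$ bookkeeping, and continuity of $c$ ensures classical solutions.

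The main obstacle is conceptual rather than computational: the result depends on justifying that the correct information set is exactly ``recovered at $t\subR$ and not infected since.'' This requires the assumptions stated in this section: immunity loss is unobserved, infections and recoveries are observed, and there are no presymptomatic states. We must also justify that $I(\cdot)$ is known to the individual, as a rational, fully informed agent, and is unaffected by their own choice. Once this is settled, the filtering computation above is routine.
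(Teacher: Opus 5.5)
Your proposal is correct, and its main argument is essentially the paper's: a one-step Bayes update over $[t,t+\tau]$ that discards the ``lose immunity and then get infected'' event as higher order, followed by a first-order expansion of $p(1-\gamma\tau)/\bigl(1-(1-p)\lambda\tau\bigr)$ and the limit $\tau\to 0$. Your exact cross-check does not appear in the paper and is a worthwhile addition: the linear unnormalized system $q_R'=-\gamma q_R$, $q_S'=\gamma q_R-\lambda q_S$ with $p=q_R/(q_R+q_S)$ gives the same Riccati equation without remainder bookkeeping, whereas the paper's $O(\tau^2)$ remainders tacitly assume more regularity of $\lambda$ than continuity of $c$ provides.
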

\begin{proof}
We let $z(t)$ denote an individual's health state and consider\footnote{
This derivation of immunity belief dynamics is a simplified version of
M. Gee's formulation for evolving ``mode beliefs'' in piecewise-deterministic Markov processes with possible premature terminations \cite[Appendix B]{gee2025occasionally}.}
transition probabilities over a short time interval $\tau$. We assume that, for sufficiently small $\tau$, the probability of multiple transitions is negligible. The probability of a recovered individual losing immunity within a time $\tau$ is 
\begin{equation}
    \mathbb{P}(z(t + \tau) = S \ | z(t) = R) = \int_{t}^{t + \tau} \gamma e^{- \gamma(s-t)} ds .
\end{equation}

If we let $\lambda(t) = \beta c\subI c(t) I(t)$, the probability of a susceptible individual becoming infected in a time $\tau$ is given by

\begin{equation}
    \mathbb{P}(z(t + \tau) = I \ | \ z(t) = S) = \int_{t}^{t + \tau} \lambda(s) e^{- \int_{t}^{s} \lambda(\eta) d\eta} ds.
\end{equation}

We denote by $\Xi(t)$ the event that re-infection has occurred by time $t$. We can then use Bayes' theorem to find that an individual's belief at time $t + \tau$ satisfies
\begin{equation}
    \begin{aligned}
        &p(t + \tau) =\\ 
        &\frac{\mathbb{P} \big(\neg \Xi(t + \tau)\ \!\! | \!\!\ 
        z(t + \tau) \!=\! R, p(t) \big) \, \, 
        \mathbb{P} \big(z(t + \tau) \! = \! R\ \! | \! \ p(t) \big)}
        {\mathbb{P}\big(\neg \Xi(t + \tau) \ | \ p(t)\big)}.        
    \end{aligned}
\end{equation}

For small $\tau$, the probability of an individual both losing  
immunity and becoming infected during the interval $[t, t+\tau]$ is negligible, so the first factor in the numerator equals $1$. The second factor  
reflects an individual's probability of being immune after a time $\tau$ given a starting value of $p(t)$:
\begin{equation}
    \mathbb{P}(z(t+\tau)=R\mid p(t)) = p(t)(1-\gamma\tau)+O(\tau^2).
\end{equation}
The denominator is dealt with similarly, and denotes the probability of no infection occurring by the time $(t + \tau)$ given $p(t)$. We write this as
\begin{equation}
    \mathbb{P}(\neg \Xi(t + \tau) \ | \ p(t)) = 1 - (1 - p(t))(\lambda(t) \tau) + O(\tau^2).
\end{equation}
Substituting these expressions gives
\begin{equation}
p(t+\tau) = \frac{p(t)(1-\gamma\tau)+O(\tau^2)} {1-(1-p(t))\lambda(t)\tau+O(\tau^2)}.
\end{equation}
Expanding to first order in $\tau$ yields
\begin{equation}
    p(t+ \tau) = p(t) - p(t) \gamma \tau  + p(t)(1 - p(t)) \lambda(t) \tau + O(\tau^2).
\end{equation}
Subtracting $p(t)$, dividing by $\tau$, and letting $\tau \to 0$, we obtain the ODE $p'(t) = f(p(t), c(t), t)$ with the $f$ specified by \eqref{eq:belief_drift}.

\end{proof}

If the contact rate $c(t) = c$ and the level of infection $I(t) = I$ are fixed, this immunity belief ODE generally has two equilibria: $p_1^* =0$ and 
$p_2^* = 1 -\frac{\gamma}{\beta c\subI I c}.$  The larger of the two $p^* = \max(p_1^*, p_2^*)$ is the asymptotic limit (as $t\rightarrow\infty$) starting from any $p(t\subR) > 0.$  Moreover, people who remain healthy after recovery will never see their $p(t)$ decrease below $p_2^*$ as long as $\beta c\subI I c > \gamma.$ 
The fact that $p_1^* =0$ is an equilibrium 
is also convenient since it 
allows us to avoid any special handling of never-before-infected Susceptibles,
whose immunity belief will stay at $p(t) \equiv 0$.

For noninfected individuals whose probability of immunity is $p$,
if they use contact rate $c$, the expected number of them becoming infected per unit time is now $\beta (1-p) \cIbar c(t) I(t) N(p,t),$ which is precisely the same as the expression derived in the previous section.  Thus, the epidemic trajectory and the value functions still satisfy the same forward-backward system of equations
(\ref{eq:NID_N}-\ref{eq:NID_cSS}) but with a new belief drift $f(p,c,t)$ specified by \eqref{eq:belief_drift}.

\subsection{Uncertain Planning Horizons} \label{sec:uncertain}

So far, we have assumed that the end time of the epidemic is 
somehow
commonly known in advance. (E.g., if an effective vaccine is guaranteed to become broadly available by a known time $T.$) 
In reality, this is never the case, and the terminal time 
should be viewed as uncertain.
At best, there might be a commonly known \emph{a priori} probability distribution for $T$.
This complicates the contact rate optimization process for each player, since the actual realization of $T$ is revealed only later.
We note that this challenge can be treated in the framework of {\em MFGs with common noise} \cite{carmona2016mean}, particularly when the number of $T$ realizations is finite.
(E.g., this approach was used in \cite{achdou2018mean, carmona2022convergence} to model evacuation from the building under
uncertainty on which exits might be open later.) The stochastic terminal time is a very specific type of common noise closely related to optimal control problems with {\em initial uncertainty} \cite{qi2025optimality, gaspard2022optimal}, and the corresponding tree of possible scenarios is greatly simplified as a result, allowing for much more efficient numerical methods.

Here, we consider a version with $n$ potential terminal times (listed in ascending order) $T \in \mathcal{T} = \left\{T_1, \dots, T_n \right\}$ and associated probabilities $\theta_k$ satisfying $\sum_{k=1}^n \theta_k = 1$.  We define time-restricted versions of all epidemic trajectory variables and the value functions, using the superscript to indicate their time interval. I.e., $N^k, I^k, D^k, V\subN^k,$ and $V\subI^k$
are defined for $t \in [T_{k-1}, T_{k}],$ with $T_0 = 0$ to simplify the notation.  We note that the value functions now reflect the {\em expected} Nash-optimized payoff with the expectation taken with respect to $T$'s probability distribution. 
In each of these time intervals, the corresponding state variables and value functions still satisfy equations (\ref{eq:NID_N}-\ref{eq:NID_cSS})
since health status transitions are independent of the time horizon, and the Taylor series argument for the value functions holds for all $t \not \in \mathcal{T}.$  
Overall, the system of equations has to be solved over the interval $[0,T_n],$ and the previously considered deterministic $T$ case corresponds to $n=1.$
As some potential terminal time $T_k$ passes without termination,
the population fractions with different health statuses remain the same, yielding the continuity conditions
$
N^k = N^{k+1}, \,
I^k = I^{k+1}, \,
D^k = D^{k+1}
$
at each $t=T_k$ with $k=1,..., n-1.$

On the other hand, for value functions, the new information that is revealed (as the epidemic continues beyond $t=T_k$)  results in {\em jump conditions},
which are easier to explain first for the simple case of 
$\mathcal{T} = \left\{T_1, T_2 \right\}.$
If the epidemic is not over by any $t >T_1$, we know that it will deterministically stop at $T_2$, yielding the familiar terminal conditions $\VN^2(p,T_2) = \Psi\subN$ and $\VI^2(T_2) = \Psi\subI.$
On the other hand, at earlier times $t < T_1,$ the horizon is still uncertain:
with probability $\theta_1$, the epidemic ends immediately at $T_1$ (resulting in the same terminal $\Psi\subN$ and $\Psi\subI$);
otherwise, with probability $\theta_2 = (1-\theta_1),$
the epidemic continues and the payoffs over the remaining time (specified by $\VN^2$ and $\VI^2$) become relevant.  
Thus,
\begin{equation}
    \begin{aligned}
        V\subN^1(p,T_1) & = \theta_1 \Psi\subN + (1 - \theta_1) V\subN^2(p,T_1), \\
        V\subI^1(T_1) & = \theta_1 \Psi\subI + (1 - \theta_1) V\subI^2(T_1).
    \end{aligned}
\end{equation}

In a general case (with $n \geq 2$),
we still have the same terminal conditions $\VN^n(p,T_n) = \Psi\subN$ and $\VI^n(T_n) = \Psi\subI$, but the jump conditions are slightly more subtle and based on conditional termination probabilities \cite{qi2025optimality}.  
We define 
\begin{equation}
    \tilde{\theta}_k = 
    \mathbb{P} \left( T = T_k \ | \ T > T_{k-1} \right)
    = \theta_{k} / \left( \sum_{l = k}^n \theta_l \right),
\end{equation}
yielding the jump conditions for each $T_k$ ($k=1,...,n-1$): 
\begin{equation} \label{eq:jump_tk}
    \begin{aligned}
        V\subN^k(p,T_k) & = \tilde{\theta}_k \Psi\subN + (1 - \tilde{\theta}_k) V\subN^{k+1}(T_k), \\
        V\subI^{k}(T_k) & = \tilde{\theta}_k \Psi\subI + (1 - \tilde{\theta}_k) V\subI^{k+1}(T_k).
    \end{aligned}
\end{equation}

\vspace*{1mm}

\section{NUMERICAL METHODS}
\label{sec:numerics}

MFG systems are often treated by forward-backward iterations, sometimes aided by ``fictitious play'' \cite{lauriere2021numerical}. For MFGs over finite state spaces (such as our base model in section \ref{sec:basic_MFG}),
an attractive alternative is to treat the resulting ODE system by the usual numerical methods for two point boundary value problems (TPBVP), including Matlab's standard {\tt bvp5c.}  All such methods require an initial guess.  We produce one by first fixing a contact rate policy (e.g., $\cS(t) = \cNbar$), solving the epidemic trajectory via (\ref{eq:SIRSD_ODEs_original_S}-\ref{eq:SIRSD_ODEs_original_D}) forward in time, then solving  
(\ref{eq:MFG_ODEs_original_VS}-\ref{eq:MFG_ODEs_original_VR}) backward in time 
to compute the payoffs of such ``ignore the epidemic'' strategy.

We also use a similar numerical approach for our other models
after discretizing the spectrum of observably waning immunity (\S\ref{sec:full_obs_wane})
or the spectrum of confidence in full disappearing immunity (\S\ref{sec:unobs_disap}).
Instead of $p \in[0,1],$ we focus on $(m+1)$ discrete $p$-levels $p_j = jh$
with $h = 1/m$ and $j=0,\ldots, m.$
Each $p_j$ represents a cell $[\check{q}_j, \hat{q}_j]$ with
$\check{q}_j = \max( 0, \, p_j - \frac{h}{2}), \, 
\hat{q}_j = \min(1, \, p_j + \frac{h}{2}),$ and
$h_j = \hat{q}_j - \check{q}_j.$
Using cell-averaged subpopulation densities $N_j(t)  
\approx \frac{1}{h_j} \int_{\check{q}_j}^{\hat{q}_j} N(p,t) dp$
and the corresponding value functions $V_j(t) \approx \VN(p_j,t),$
we approximate each PDE involving $p$ by a system of $(m+1)$ ODEs.

To simplify the notation, we use 
$f_j = f(p_j, c_{j}^*, t),$
the standard positive/negative part notation
i.e., $f^{\pm} = \pm\max(\pm f, 0)$ with $f = f^{+} + f^{-}$,
and $\Delta$ for the Kronecker delta.
Equations \eqref{eq:NID_discr_N} yield a finite volume semi-discretization
of PDE \eqref{eq:NID_N}, 
using an upwind scheme based on flux-vector splitting\footnote{
Since $f$ is not $N$-dependent, this PDE is linear and our flux still 
ensures $N$-conservation (except for infections and recoveries, of course).
Compared to the standard Godunov flux 
$\Phi_{j+\frac{1}{2}} = f_{j+\frac{1}{2}}^+ N_{j} + f_{j+\frac{1}{2}}^- N_{j+1}$,
ours has an added advantage that with $m=1$ the system 
(\ref{eq:NID_discr_N}-\ref{eq:NID_discr_cSS})
becomes exactly equivalent
to the original SIRSD-MFG model presented in \S\ref{sec:basic_MFG}.
}
with interface fluxes
$\Phi_{j+\frac{1}{2}} = f_{j}^+ N_{j} + f_{j+1}^- N_{j+1}$
and the convention that $N_{-1} = N_{m+1} = 0$ to
avoid the special handling of edge cases.
Equations \eqref{eq:NID_discr_VN} yield a Lax-Friedrichs semi-discretization
of Hamilton-Jacobi PDE \eqref{eq:NID_VN}, 
with $\alpha \geq \max|f|$ 
and the convention that $V_{-1} = 2V_0 - V_1,$
$\, V_{m+1} = 2V_m - V_{m-1}$ to
avoid the special handling of edge cases.
The full $p$-spectrum system 
(\ref{eq:NID_N}-\ref{eq:NID_cSS})
is thus approximated by a system of $(2m+5)$ ODEs
with $j=0,\ldots, m$:
\begin{align}
    \label{eq:NID_discr_N}
    \frac{dN_j}{dt} & =  
    - \frac{\Phi_{j+\frac{1}{2}} - \Phi_{j-\frac{1}{2}}}{h}
    -\beta \cIbar I c_j^* (1 - p_j) N_j 
    + \Delta_{j,m} \mu I, \\
    \label{eq:NID_discr_I}
    \frac{dI}{dt}  & =  \beta \cIbar I \sum_{j=0}^m c_j^* (1 - p_j) N_j - \mu I - \delta I, \\
    \label{eq:NID_discr_D}
    \frac{dD}{dt} & =  \delta I, \\
     \nonumber
     \frac{d V_j}{dt} & = - \uN({c}_j^*) 
     + \beta \cIbar I {c}_j^* (1 - p_j) \left(V_j - V\subI\right) \\
    \label{eq:NID_discr_VN}
     & \hspace*{4mm} 
    - f_j \frac{V_{j+1}  - V_{j-1}}{2h}
    - \alpha \frac{V_{j+1}  - 2V_j + V_{j-1}}{2h},\\ 
    \label{eq:NID_discr_VI}
    \frac{d V\subI}{dt} & =  - \uIbar + \mu \left(V\subI - V_{m}\right) + \delta \left( V\subI - \VD\right),
\end{align}
and the Nash-optimal contact rate for noninfected individuals
\begin{align}
    \nonumber
    & \hspace*{-3mm}
    c_{j}^* =  \argmax_{c} 
    \bigg\{ 
    u\subN(c) + \beta \cIbar I c (1 - p_j) (V\subI - V_j) \, + \,\\
    \label{eq:NID_discr_cSS}    
    & 
      \hspace*{37mm}
      f(p_j,c,t) \frac{V_{j+1}  - V_{j-1}}{2h}
    \bigg\}.
\end{align}
The terminal conditions are $V_j (T) = 0$ and $\VI(T) = \Psi\subI$ while
 all $N_j(0), \, I(0),$ and $D(0)$ are specified by the initial epidemiological situation, typically with $D(0) = 0, \, N_0(0) = 1-I(0),$ and $N_j(0) = 0$ for
$j>0.$  We then solve this TPBVP using {\tt bvp5c.} 
For finer $p$-discretizations (with $m > 10$),
the resulting ODE system becomes stiff, and we require a better initial guess. To do this, we employ numerical continuation in $m$,
starting with $m = 10$ and interpolating the result as an initial guess for successively higher $m$ values\footnote{To ensure computational reproducibility, all codes used in this paper, as well as additional examples, are available at \url{https://eikonal-equation.github.io/MFG-NID-with-horizon-uncertainty}. Solving the TPBVP takes between 2.5 and 16 seconds on an Apple M3 Pro for each of the examples, depending on the level of $p$-discretization.}.  
We note that {\tt bvp5c} can also handle jump conditions at interior points, which makes it also suitable for the version of our problem with the time horizon uncertainty (\S\ref{sec:uncertain}).

\section{COMPUTATIONAL EXPERIMENTS}
\label{sec:experiments}

In this section, we demonstrate a few salient examples for each model. 
Throughout, we use the following parameter values, most of which mirror two previous MFG-studies in \cite{cho2020, buckley2025behavioral}:
$\beta = 0.05$,
$\mu = 1/10$,
$\gamma = 1/90$,
$\delta = 10^{-3}$,
and $\VD = -10^3.$
For initial conditions, we always use $D(0)=0$ and $I(0) = 5\times 10^{-3},$ with all others assumed to be initially fully susceptible with no immunity. The regularizing Lax-Friedrichs coefficient $\alpha$ must satisfy $\alpha \geq \max |f|$ to ensure stability. For the waning immunity model, we have $f(p) = - \gamma p$ so $|f(p)| \leq \gamma$ and we choose $\alpha = \gamma$. For the disappearing immunity model, the belief drift has the additional term $\beta \cIbar I c p(1-p)$ which increases the bound on $|f|$, so we use $\alpha = 1/10$ for our choice of parameters.
The planning horizon is first taken to be deterministically ten months ($T = 300$ days), and then {\em at most} ten months when we consider the time horizon uncertainty ($T_n = 300$ days).

We begin by comparing 
the basic MFG-SIRSD model (in which the loss of immunity is instantaneous and fully observed, \S\ref{sec:basic_MFG}) against 
the basic SIRSD model (\S\ref{sec:SIRSD}), in which we assume that everyone uses ``myopic'' (instantaneous utility maximizing) contact rates $\cS = \cR = \cNbar$ and $\cI = \cIbar.$  The epidemic trajectories resulting from these models are shown in Fig.~\ref{fig:SIRSD}(top).  
The myopic SIRSD model (plotted in dashed lines) exhibits a sharp increase
in Infected and a slump (once the number of Susceptibles is diminished),
followed by a small increase in Infected once many of the Recovered start to lose immunity. 
Overall, selfish/independent individuals in the MFG-SIRSD model manage to reduce the time-averaged fraction of infected by more than twofold, achieve a near twofold reduction in the peak infection level, and decrease the number of infection-related deaths by $\approx22\%.$  
This improvement of epidemic outcomes is 
achieved by reducing the contact rates of Susceptibles, $\cSS(t),$ shown in green in Fig.~\ref{fig:SIRSD}(bottom).
One subtlety worth noting is the decline in $\cSS(t)$ towards the end of the planning horizon\footnote{While this phenomenon is more prominent with our non-zero penalty for being infected at the end ($\Psi\subI,$ defined in formula \eqref{eq:infected_terminal}),
it is also present even if $\Psi\subI=0$ as long as $T$ is sufficiently large.  In the latter case, there is also an upswing in $\cSS(t) \rightarrow \cNbar$ in the last $1/\mu$ days since $\Psi\subI=0$ would reflect a different assumption that those still infected in the end are instantly cured at the time $T;$ e.g., see \cite{cho2020} and \cite{buckley2025behavioral}.}.  
This is due to a ``silver lining'' of getting infected early -- the hope of surviving and then enjoying a long period with a high contact rate while immunity lasts, even though Susceptibles still need to be careful.  
The value of this temporary bonus decreases towards the end (when the remaining time $(T-t)$ decreases below $1/\mu + 1/\gamma=100$) since we assume that Susceptibles will not have to be careful beyond the time $T.$

\begin{figure}[h]
    \centering
    \includegraphics[width=\linewidth]{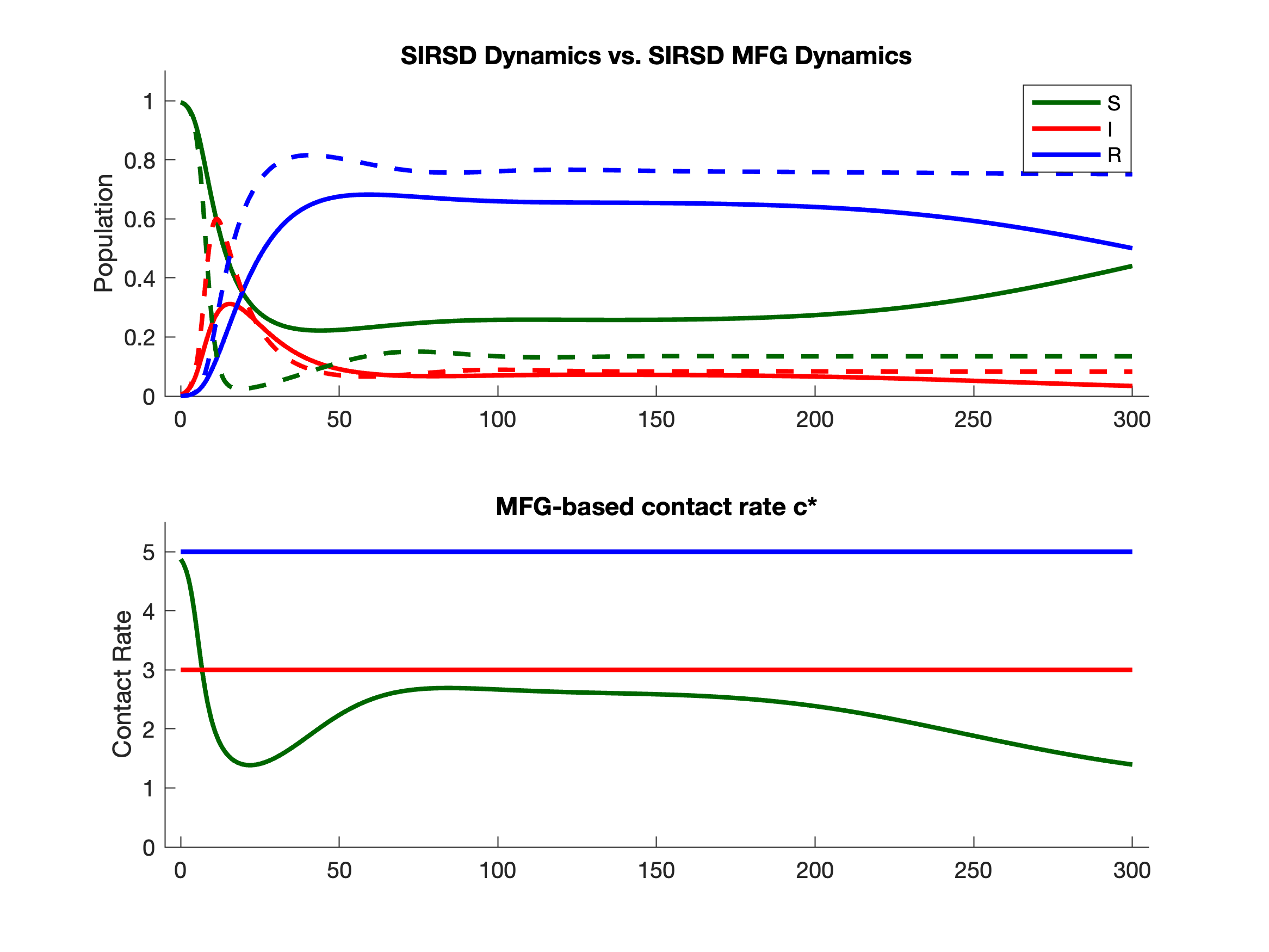}
    \caption{\small Comparison of MFG-SIRSD and ``SIRSD with myopic contact rates'' models.  TOP: The epidemic trajectories for these two models are shown by solid and dashed lines respectively. Susceptibles (S) are plotted in green, Infected (I) - in red, and Recovered/Immune (R) - in blue.  Dead (D) are not plotted to simplify the figure. Performance statistics for both models: 
    (Peak I$\,\approx0.3117,$ Mean I$\,\approx0.0823,$ Final D$\,\approx0.0247$) in MFG-SIRSD;
    (Peak I$\,\approx0.6000,$ Mean I$\,\approx0.1869,$ Final D$\,\approx0.0318$) in myopic/baseline SIRSD.
    BOTTOM: Nash-optimal contact rates in the MFG-SIRSD. 
    }
    \label{fig:SIRSD}
\end{figure}

Next, we examine the dynamics and contact rates under the fully observed waning immunity model (\S\ref{sec:full_obs_wane})
and the unobserved disappearing immunity model (\S\ref{sec:unobs_disap}). 
Since these two generalizations of MFG-SIRSD are based on very different assumptions about the disease and available information, their direct comparison of performance statistics requires many caveats, and we instead primarily compare each of them to the basic MFG-SIRSD case. 
We present these experiments in Fig.~\ref{fig:comparison} using a PDE discretization with nine different $p$-bands (i.e., $m = 8$). 

Waning immunity (Fig.~\ref{fig:comparison}(a)) makes individuals much more vulnerable soon after recovery. Thus, in this setting, a myopic/epidemic-oblivious strategy with all $c_j = \cNbar$ (not shown here due to space constraints) would lead to even more serious consequences 
(Peak I$\,\approx 0.6021,$ Mean I$\,\approx 0.2523,$ Final D$\,\approx 0.0644$) 
than in the basic SIRSD case of Fig.\ref{fig:SIRSD}.
Nash-optimizing agents respond to this increased vulnerability by adopting
much lower contact rates in the lower (more susceptible) $p$-bands,
leading to an almost twofold decrease in peak infection compared to MFG-SIRSD even though the time-averaged infection level and the mortality become slightly higher.  
In the highest immunity band, $p \in [\frac{15}{16}, 1],$ the lack of risk in socializing yields $c_m^*(t) = \cNbar.$  
In the next immunity band,  $p \in [\frac{13}{16}, \frac{15}{16}),$ the contact rate is already only slightly above the Infected's $\cIbar=3.$
In the third highest band, $p \in [\frac{11}{16}, \frac{13}{16}),$
the Nash-optimal contact rate dynamics is closer to what Susceptibles did in Fig.~\ref{fig:SIRSD}, but with a less cautious behavior near the peak of infection and no reduction in contact rates as $t \rightarrow T.$  

\begin{figure*}[!t]
    \centering
    \begin{subfigure}[t]{0.48 \textwidth}
        \centering
        \includegraphics[width=\linewidth]{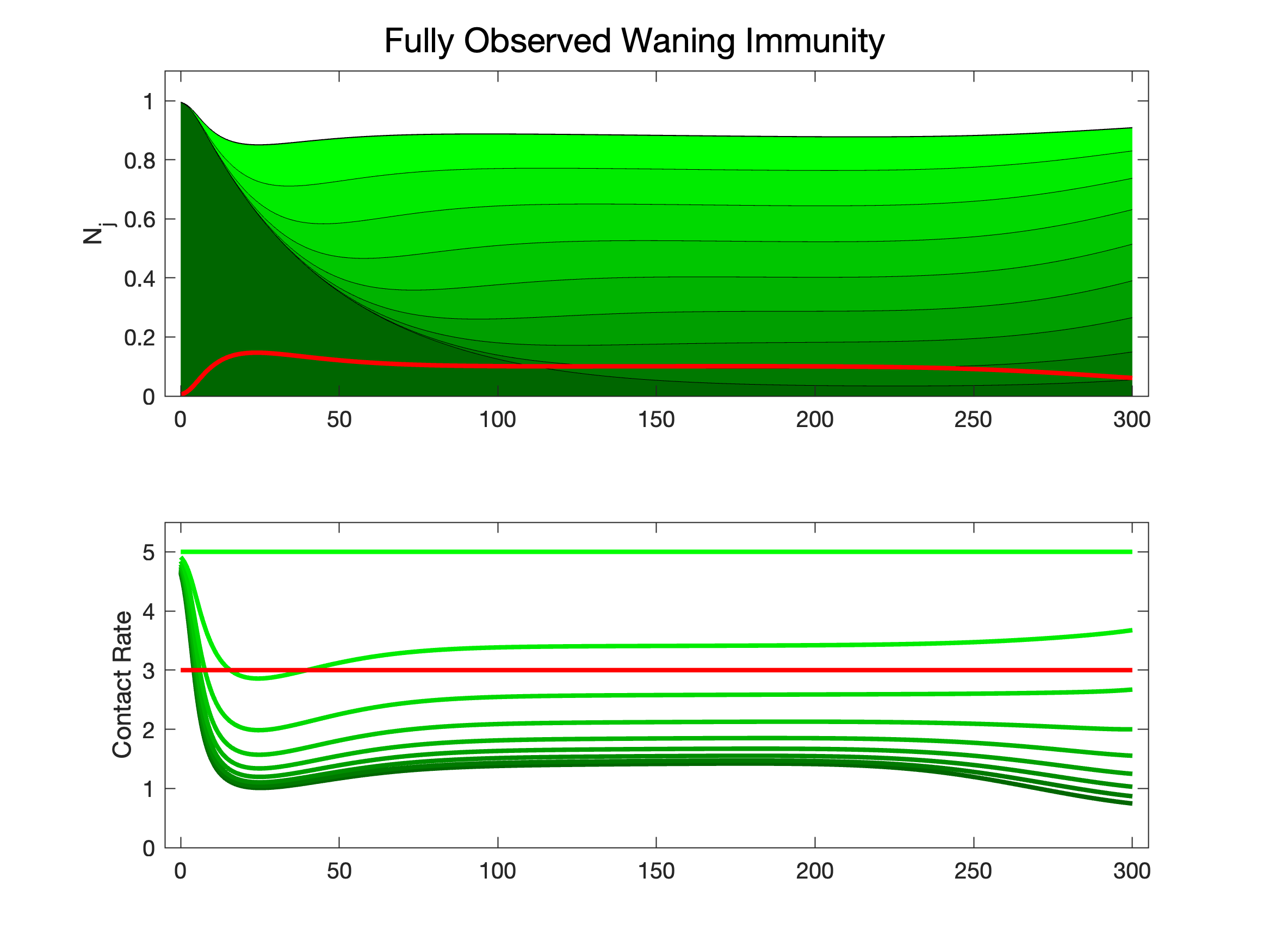}
        \caption{\small Fully observed waning immunity. Performance statistics:\\ (Peak I $\approx$ 0.1468, Mean I $\approx$ 0.1002, Final D $\approx$ 0.0301).}
        \label{fig:wane}
    \end{subfigure}
    \vspace{0.5cm} 
    \begin{subfigure}[t]{0.48 \textwidth}
        \centering
        \includegraphics[width=\linewidth]{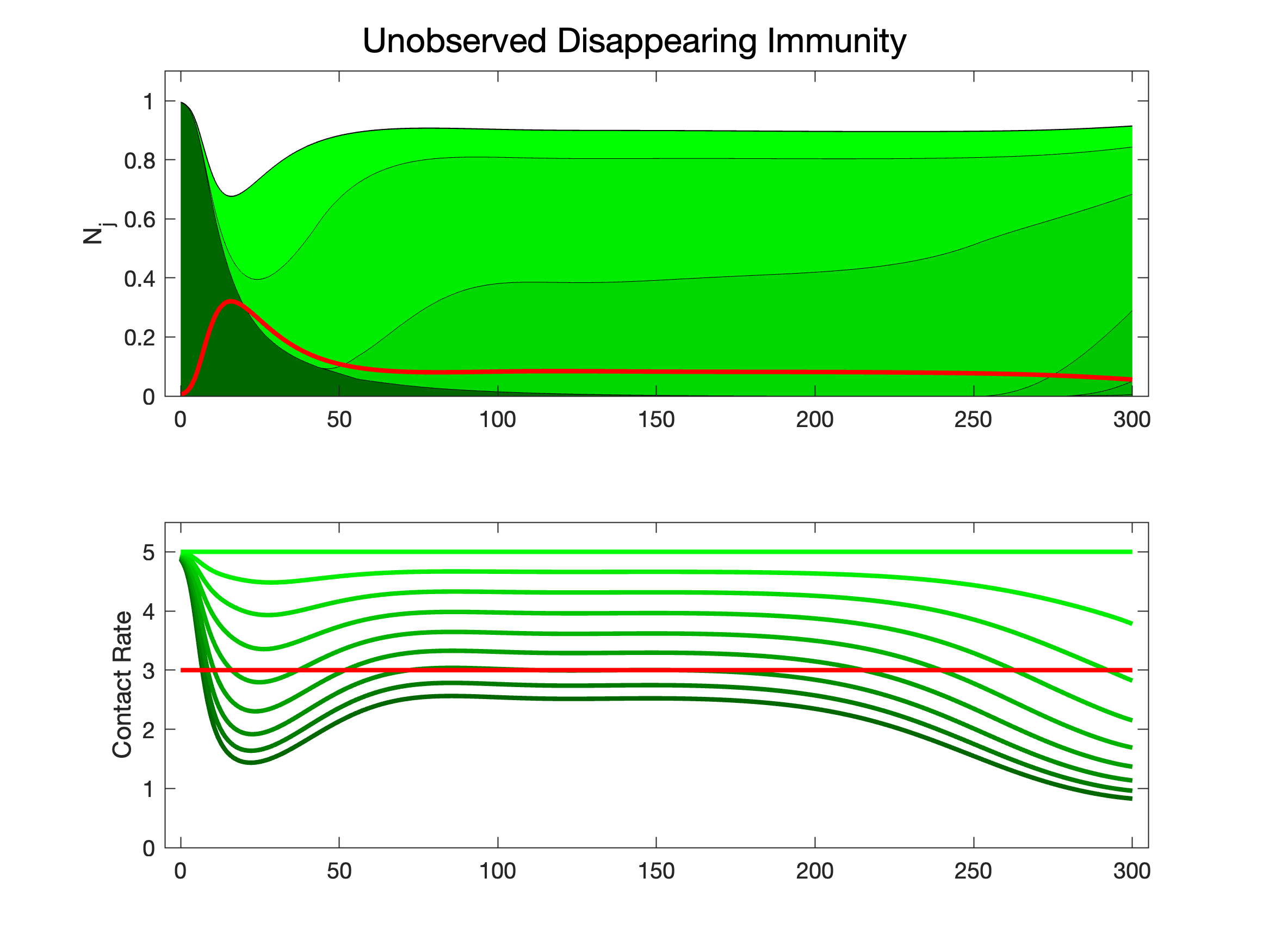}
        \caption{\small Unobserved disappearing immunity.
        Performance statistics: (Peak I $\approx$ 0.3209, Mean I $\approx$ 0.0985, Final D $\approx$ 0.0296).}
        \label{fig:bayes}
    \end{subfigure}
    \caption{\small Dynamics and contact rates under (a) the fully observed waning immunity model and (b) the unobserved disappearing immunity model;
    both computed for nine $p$-bands (i.e., $m=8$).
    TOP: Fractions of noninfected individuals $N_j(t)$ shown by stacked green area plots, with the shades of green corresponding to different $p$-bands  (interpreted as different bands of immunity in subfigure (a) on the left vs.  different bands of immunity {\em confidence} in subfigure (b) on the right).  The darkest green corresponds to the highest susceptibility ($p \in [0,\frac{1}{16})$); the lightest green is the highest immunity ($p \in [\frac{15}{16}, 1]$).  The fraction of infected $I(t)$ is plotted on top in red.  
    BOTTOM: Nash-optimal contact rates for each $p$-band in each model.
    \label{fig:comparison}
    }
\end{figure*}

Turning to the unobserved disappearing immunity model (Fig.\ref{fig:comparison}(b)), we see a slight worsening in all three performance statistics compared to the basic MFG-SIRSD model (Fig.~\ref{fig:SIRSD}). 
This is not surprising, since the loss of immunity was fully observable there, so each individual had more information when choosing contact rates.
Those who are fairly sure that they do not have immunity ($p \in [0,\frac{1}{16})$, the darkest green in Fig.\ref{fig:comparison}(b), including those who have never been infected) are somewhat more cautious than the Susceptibles in Fig.~\ref{fig:SIRSD}. However, at higher immunity confidence levels, the contact rates are significantly higher than we have seen in other models, approaching $\cNbar$ at a fairly uniform rate as $p \rightarrow 1.$  Moreover, unlike in the case of waning immunity, here those who do not become re-infected for a long time stay more confident about their chances of remaining fully immune. On the time interval $[75,175],$ the infection level and contact rates stay fairly constant, and post-recovery immunity confidence levels do not decrease below 
$p^* = 0.7926$. 
As a result, the majority of noninfected individuals fall into two confidence bands spanning $p \in [\frac{11}{16}, \frac{15}{16}),$ and they adopt far less conservative contact rates $c_7^*(t)$ and $c_6^*(t).$ 
Quite a few of them eventually become re-infected and later re-enter the highest immunity confidence band $p \in [\frac{15}{16}, 1]$ upon recovery.

To demonstrate the effects of time-horizon uncertainty (\S\ref{sec:uncertain}), we first look at two potential terminal times $T \in \{150, 300\}$ and 
assume that the epidemic might end after 5 months (at $t=150$) with 
probability $\theta.$  Focusing for now on MFG-SIRSD, in Fig.~\ref{fig:sweep} we show the consequences of this uncertainty for three different likelihood levels $\theta = 0.1, 0.5, 0.9.$
In all cases, Susceptibles' Nash-optimal contact rates 
decline as we approach that possible early termination (for reasons already explained in the MFG-SIRSD analysis), followed by a discontinuous jump to a higher level (temporarily overcompensating) 
if the epidemic turns out not to end at $t=150.$
This jump is due to two factors: (1) the ``silver lining of infection'' argument now applies again for a large stretch 
of the remaining 5 months\footnote{
Declining compliance with social distancing recommendations
has been also observed in practice, when people realize that 
an ongoing
epidemic will last significantly longer than originally expected \cite{petherick2021worldwide}.  While it is natural to attribute such behavior to compliance fatigue, the above ``silver lining of infection'' argument might be 
also a part of their rationalization.
}
 and (2) the chances of becoming ill in the near future
are somewhat lower since the fraction of Infected 
has been decreasing on the way to $t=150.$
Not surprisingly, the size of the jump in contact rates 
increases with $\theta$;  the original deterministic MFG-SIRSD version can be recovered in the limit as $\theta \rightarrow 0.$ 

\begin{figure}[h]
    \centering
    \includegraphics[width=\linewidth]{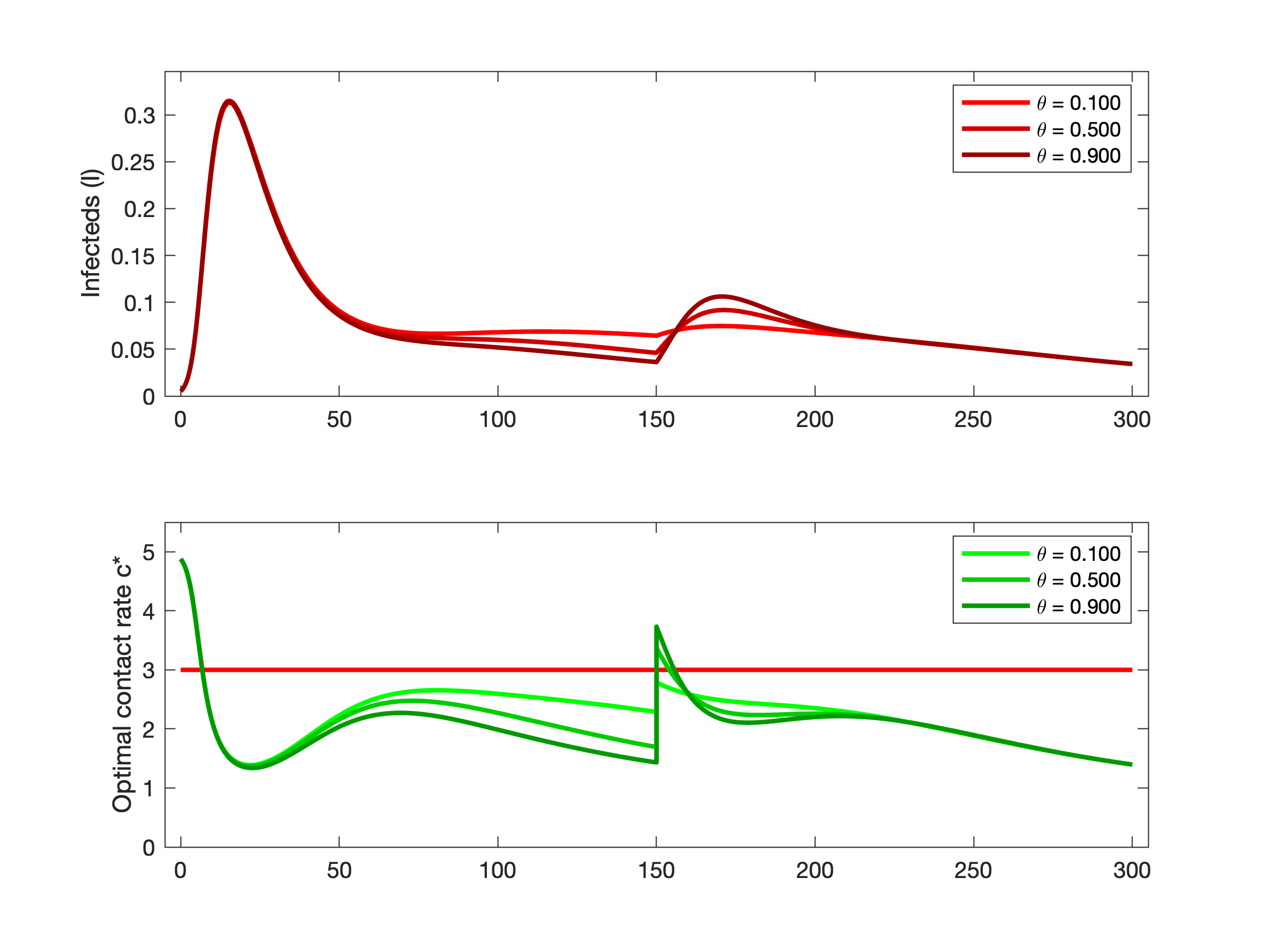}
    \caption{\small Uncertain horizon MFG-SIRSD example with $T \in \{150, 300\}$
    and $\mathbb{P}(T=150) = \theta,$ shown for three different values of $\theta.$  TOP: Infection dynamics.  BOTTOM: Nash-optimal contact rates for Susceptibles.}
    \label{fig:sweep}
\end{figure}

Our framework can be similarly used to handle horizon-uncertainty
with a larger number of possible $T$ values and with more complicated
immunity models. 
Due to space constraints, we include only one representative example (Fig.~\ref{fig:5jumps_bayes}) with five possible terminal times of the epidemic in an unobserved disappearing immunity model. 
To make this figure easier to interpret, we use only five $p$-bands ($m=4$) and one specific probability distribution on possible terminal times. 
Qualitatively, the observed features are consistent with those already explained above for Fig.~\ref{fig:sweep}.

\begin{figure}[h]
    \centering
    \includegraphics[width=\linewidth]{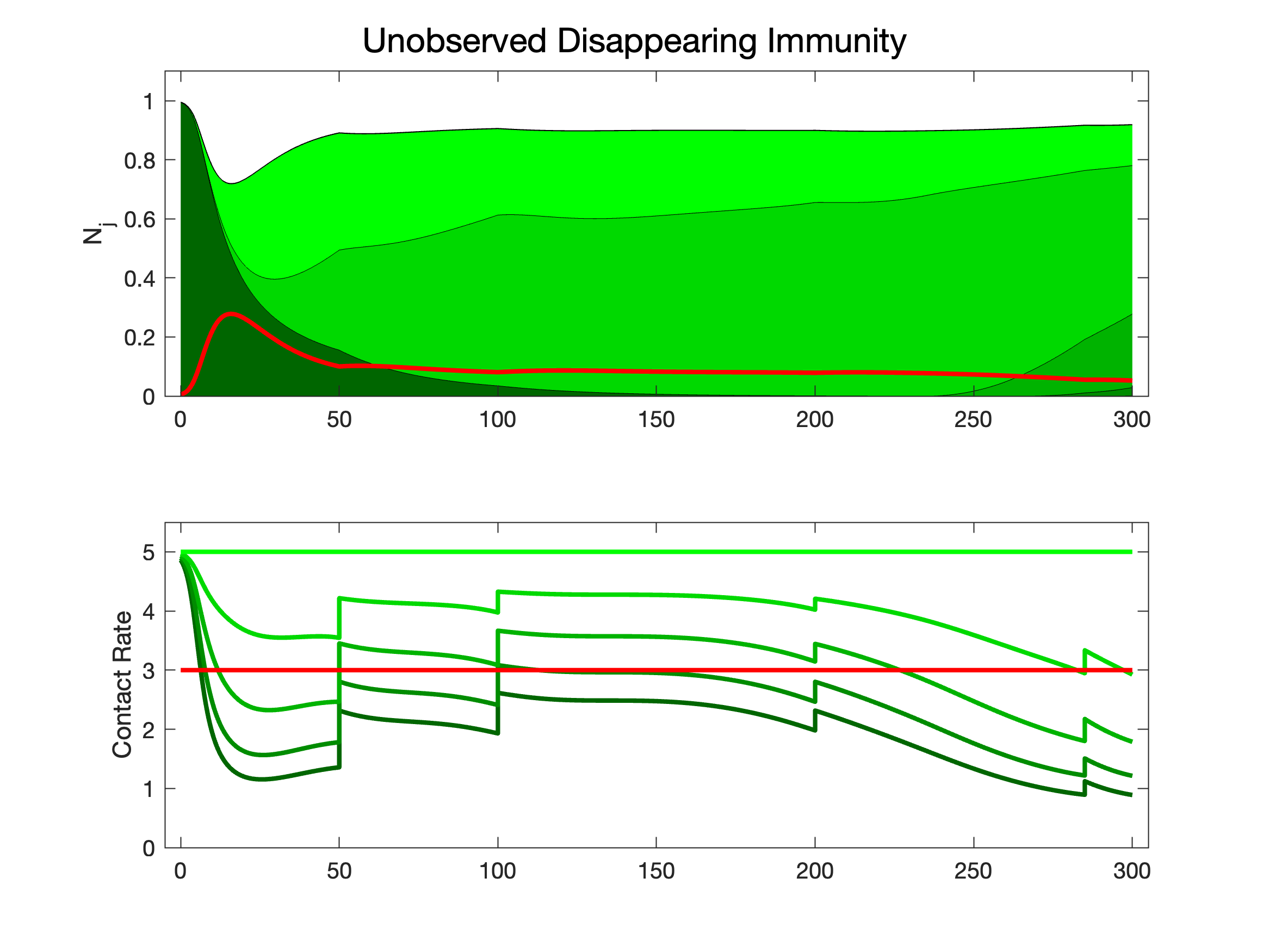}
    \caption{\small Horizon-uncertainty implemented in the unobserved disappearing immunity model.  
    Terminal $T \in \{50, 100, 200, 285, 300\}$ with the associated probabilities $(0.2, 0.1, 0.05, 0.5, 0.15).$ Computed and plotted with only five $p$-bands ($m=4$) for the sake of visual interpretability.
    TOP: dynamics of Infected (red) and Noninfected (green) fractions,
    with the latter shown using stacked area plots and shades of green corresponding to different immunity confidence bands.
    Performance statistics: (Peak I $\approx$ 0.2782, Mean I $\approx$ 0.1156, Final D $\approx$ 0.0222).
    BOTTOM: Nash-optimal contact rates for Noninfected, exhibiting jump discontinuities at all possible ``early termination'' times.}
    \label{fig:5jumps_bayes}
\end{figure}

\section{CONCLUSIONS}
\label{sec:conclusions}
We presented a modeling and computational framework for
incorporating partial observability and time-horizon uncertainty
in epidemiological MFG models.  We hope that similar approaches
will be also useful in other applications of MFGs.
To increase its practical applicability, our model in \S\ref{sec:full_obs_wane} could be 
extended to handle immunity-dependent course of infection 
(e.g., reduced utility penalties and a lower death rate for those re-infected relatively soon after recovery \cite{angelov2024immuno}).
Both models will also become more realistic once we include
subpopulations with other (non-rational) behavioral patterns \cite{buckley2025behavioral} 
or with presymptomatic individuals \cite{olmez2022modeling}.
Additional directions for future work include 
hybrid (waning/disappearing) immunity models
and continuous distributions for the uncertain time horizon.

\newest{In this paper, we have primarily focused on the modeling framework and numerics, sidestepping important theoretical questions.  E.g., while all of our numerical tests indicate the well-posedness of TPBVP problem (\ref{eq:NID_discr_N}-\ref{eq:NID_discr_cSS}), it will be highly useful to prove the uniqueness of Nash equilibrium rigorously both in that semi-discretized ODE version and in the full forward-backward PDE system (\ref{eq:NID_N}-\ref{eq:NID_cSS}).}

%FROM IEEE template: A conclusion section is not required. Although a conclusion may review the main points of the paper, do not replicate the abstract as the conclusion. A conclusion might elaborate on the importance of the work or suggest applications and extensions. 

%\addtolength{\textheight}{-12cm}   % This command serves to balance the column lengths
                                  % on the last page of the document manually. It shortens
                                  % the textheight of the last page by a suitable amount.
                                  % This command does not take effect until the next page
                                  % so it should come on the page before the last. Make
                                  % sure that you do not shorten the textheight too much.

%%%%%%%%%%%%%%%%%%%%%%%%%%%%%%%%%%%%%%%%%%%%%%%%%%%%%%%%%%%%%%%%%%%%%%%%%%%%%%%%

%%%%%%%%%%%%%%%%%%%%%%%%%%%%%%%%%%%%%%%%%%%%%%%%%%%%%%%%%%%%%%%%%%%%%%%%%%%%%%%%

%%%%%%%%%%%%%%%%%%%%%%%%%%%%%%%%%%%%%%%%%%%%%%%%%%%%%%%%%%%%%%%%%%%%%%%%%%%%%%%%
% \section*{APPENDIX}

% Appendixes should appear before the acknowledgment.

% \section*{ACKNOWLEDGMENT}

% We would like to thank...

%%%%%%%%%%%%%%%%%%%%%%%%%%%%%%%%%%%%%%%%%%%%%%%%%%%%%%%%%%%%%%%%%%%%%%%%%%%%%%%%

\bibliographystyle{IEEEtran}
\bibliography{IEEEabrv,references}

\end{document}